\documentclass[11pt]{amsart}
\usepackage[margin=3cm]{geometry}
\usepackage{bm}
\usepackage{amsthm,amsmath,mathtools}
\usepackage{amssymb}
\usepackage{mathrsfs}
\usepackage{hyperref,xcolor}
\usepackage[foot]{amsaddr}
\usepackage{stmaryrd}
\usepackage{graphicx}

\newcommand{\R}{\mathbb{R}}

\newcommand{\Z}{\mathbb{Z}}
\newcommand{\e}{\epsilon}

\newcommand{\Ric}{\mathrm{Ric}}

\DeclareMathOperator{\ind}{ind}
\DeclareMathOperator{\nul}{nul}

\DeclareMathOperator{\supp}{supp}
\DeclareMathOperator{\area}{area}

\newtheorem*{thm*}{Theorem}
\newtheorem*{namedthm}{\namedthmname}
\newtheorem{lem}{Lemma}
\newtheorem*{lem*}{Lemma}
\newtheorem{prop}{Proposition}

\newtheorem*{coro*}{Corollary}

\theoremstyle{definition}

\newtheorem{rmk}{Remark}
\newtheorem*{que}{Question}

\newcounter{namedthm}
\makeatletter
\newenvironment{named}[1]
  {\def\namedthmname{#1}%
   \refstepcounter{namedthm}%
   \namedthm\def\@currentlabel{#1}}
  {\endnamedthm}
\makeatother

\title{Allen-Cahn equation and degenerate minimal hypersurface}
\author{Jingwen Chen\textsuperscript{1}, Pedro Gaspar\textsuperscript{2}}
\address{\parbox{\linewidth}{
\textsuperscript{1} Department of Mathematics, University of Pennsylvania, \\
David Rittenhouse Lab,
209 South 33rd Street,
Philadelphia, PA 19104 \\ \textsuperscript{2} Facultad de Matem\'aticas, Pontificia Universidad Cat\'olica de Chile \\ Avenida Vicuña Mackenna 4860, Santiago, Chile \smallskip}}
\email{jingwch@sas.upenn.edu, pedro.gaspar@mat.uc.cl}

\begin{document}

\begin{abstract}
In this short note, we present new observations and examples concerning the existence and rigidity of solutions to the Allen-Cahn equation with degenerate minimal hypersurfaces as their limit interfaces.
\end{abstract}

\maketitle

\section{Introduction}

In this article, we present new observations and examples related to degenerate minimal hypersurfaces and solutions of the Allen-Cahn equation approximating those minimal hypersurfaces in compact Riemannian manifolds. In particular, we describe a Riemannian metric of nonnegative Ricci curvature on $S^n$, $n \geq 3$, in which:
\begin{enumerate}
    \item[(i)] there exist \emph{connected}, degenerate, stable minimal hypersurfaces which cannot be obtained as the limit interface of solutions to the Allen-Cahn equation;
    \item[(ii)] the space of minimal hypersurfaces obtained from min-max solutions of the Allen-Cahn equation is strictly smaller than those produced by Almgren-Pitts min-max theory; and
    \item[(iii)] a family of nondegenerate index $1$ solutions of the Allen-Cahn equation which has a degenerate stable limit interface.
\end{enumerate}
We observe that the nonexistence of solutions only depends on the nonnegativity of the Ricci curvature of the ambient manifold, and we do not require any condition on the multiplicity of the limit interface (while multiplicity one is often assumed in min-max theory \cite{ChodoshMantoulidis, MarquesNevesMultiplicity,Zhou}).\bigskip

Let $(M^n,g)$ be a compact Riemannian manifold. The \emph{Allen-Cahn equation}:
\begin{equation} \label{AC}
\Delta_g u -  {\textstyle \frac{1}{\e^2}}W'(u)=0 \tag{AC}
\end{equation}
on $M$ serves as a model for phase transition and separation phenomena \cite{AC}. Here, $u$ is a real-valued function defined on $M$, $\epsilon$ is a positive parameter, and $W$ represents a symmetric, nonnegative double-well potential with wells located at $\pm 1$ -- a typical example is $W(u) = (1-u^2)^2/4$. Solutions to the Allen-Cahn equation are closely related to minimal hypersurfaces, roughly speaking, the nodal set of a sequence of solutions $u_{\e_j}$ of \eqref{AC} with $\e_j \searrow 0$ accumulates on a minimal hypersurface as $j \to \infty$, and we call this minimal hypersurface the \emph{limit interface} obtained from $u_{\e_j}$. The Allen-Cahn equation has been extensively studied and widely used in recent decades in view of their connection to minimal surfaces; we refer the reader to \cite{ChenGaspar,Guaracominmax} and the references therein for some applications. A central topic in the study of the Allen-Cahn equation is the existence of solutions with a given minimal hypersurface as their limit interface. Pacard and Ritor{\'e} \cite{pacard2003constant} positively answered this topic in the nondegenerate case, with relatively few works addressing the degenerate case. We now briefly describe some convergence and existence results in these directions.\medskip

Hereafter we denote by $\sigma = \int_{-1}^1 \sqrt{W(t)/2} dt$ the energy of the \emph{heteroclinic solution} $\mathbb{H}_\e(t)$ of \eqref{AC} on $\R$, namely, the unique bounded solution in $\R$ (modulo translation) such that $\mathbb{H}_\e(t) \to \pm 1$ when $t \to \pm \infty$. For $n \geq 3$, the convergence of solutions of \eqref{AC} to limit interfaces can be described more precisely as follows. Let $\{u_j\}$ be a sequence of solutions of \eqref{AC} with $\e=\e_j \searrow 0$. Suppose that their energy $E_{\e_j}(u_j)$ and Morse index $\ind_{\e_j}(u_j)$ are bounded. It follows from the combined works of J. Hutchinson, Y. Tonegawa, N. Wickramasekera and M. Guaraco \cite{HT,TW,Guaracominmax} that we can find a subsequence of $u_j$ such that their associated varifolds converge to a stationary $(n-1)$ varifold $V$ on $M$, and $\frac{1}{\sigma} V$ is integral. Moreover, $u_j$ converges uniformly to $\pm 1$ in compact subsets of $M \setminus \supp \|V\|$, and $\supp \|V\|$ is a smooth, embedded, minimal hypersurface in $M$ away from a closed set of Hausdorff dimension $\leq (n-8)$, which is called a \emph{limit interface} obtained from $u_j$. We also note that a strong convergence result was proved, more recently, for generic metrics on $3$-dimensional manifolds by O. Chodosh and C. Mantoulidis in \cite{ChodoshMantoulidis}.\smallskip

Conversely, Pacard and Ritor{\'e} \cite{pacard2003constant,Pacard} used an infinite dimensional \emph{Lyapunov-Schmidt reduction} method to prove that any \emph{nondegenerate} minimal hypersurface $\Gamma$ in $(M^n,g)$ can be obtained as a limit interface of solutions $u_{\epsilon}$ of \eqref{AC} whose nodal set is a small perturbation of $\Gamma$, provided $\Gamma$ separates $M$ in the following sense: there is a smooth function $f_{\Gamma}$ on $M$ such that $0$ is regular value for $f_{\Gamma}$ and $\Gamma = f_{\Gamma}^{-1}(0)$. For the case of geodesics in Euclidean domains this was proved by Kowalczyk in \cite{Kowalczyk}, and a similar construction for complete minimal surfaces of finite total curvature was carried out by del Pino-Kowalczyk-Wei in \cite{dPKW}. The existence of solutions whose nodal sets have multiple components that converge to a minimal hypersurface with higher multiplicity was addressed by del Pino-Kowalczyk-Wei-Yang in \cite{dPKWY}, under the nondegeneracy assumption and assuming a positivity condition on $|A_\Gamma|^2+\Ric_g(\vec{n_\Gamma},\vec{n_\Gamma})$. Later, Caju and the second named author \cite{caju2019solutions} observed that the construction of \cite{Pacard} can be carried out for any minimal hypersurface $\Gamma$ such that all the Jacobi fields of $\Gamma$ are generated by ambient isometries. We also mention the construction of Allen-Cahn solutions whose nodal set approximate the \emph{Simons minimal cone} in $\R^N$, for $N\geq 9$, by del Pino-Kowalczyk-Wei \cite{dPKW-DeGiorgi}, and which are connected to a well-known conjecture of De Giorgi about entire, monotone solutions of \eqref{AC} in $\R^N$. Finally, we note that variational constructions of solutions of \eqref{AC} that approximate a given nondegenerate minimal hypersurface have been recently obtained by G. De Philippis and A. Pigati in \cite{DePhilippisPigati}, and alternatively by E.M. Silva in \cite{Silva}.

Nevertheless, there are relatively few works that discuss obstructions to the existence and the convergence of solutions of \eqref{AC} to general, possibly degenerate minimal hypersurfaces. In \cite{guaraco2019multiplicity}, Guaraco-Marques-Neves discuss a connected, non-separating strictly stable minimal surface that cannot be obtained as a limit interface of solutions of the Allen-Cahn equation with multiplicity $>1$. This obstruction arises from their improved convergence estimates for strictly stable limit interfaces, which implies that such minimal hypersurfaces can only arise as multiplicity one interfaces. In \cite{mantoulidis2022double}, Mantoulidis described an interesting family of examples of degenerate, disconnected minimal hypersurfaces in a product manifold of the type $M \times S^1$ that cannot be obtained as limit interfaces of solutions of the Allen-Cahn equation. These are unions of an even number of slices $M \times \{\theta_i\}$ for certain $\theta_i \in S^1$ so that these surfaces separate the ambient manifold $M \times S^1$. Note that these examples are degenerate stable, as any rotation with respect to the $S^1$ factor gives rise to a positive Jacobi field.\medskip

The first main result of this paper is:

\begin{named}{Theorem A} \label{thma}
Let $\tilde g$ be a Riemannian metric on $S^n$ with nonnegative Ricci curvature that is invariant by the action of $\mathrm{O}(n)$ in the last $n$ coordinates in $S^n\subset \R^{n+1}$. Suppose that:
\begin{enumerate}
    \item[(i)] The reflection $(x_1, x_2\cdots,x_{n+1}) \mapsto (-x_1,x_2\cdots,x_{n+1})$ is an isometry of $(S^n,\tilde g)$.
    \item[(ii)] There exists $\delta \in (0,1)$ and $r>0$ such that the region $\{x \in S^n \mid |x_1|\leq\delta\}$ is isometric to a finite cylinder $[-r,r] \times S^{n-1}$, so that, for any $t \in (-\delta,\delta)$, the spheres $\Sigma_t=\{x_1=t\}$ are degenerate stable minimal (in fact, totally geodesic) hypersurfaces in $(S^n,\tilde g)$.
\end{enumerate}
Then the middle sphere $\Sigma_0$ is the only one among the family of minimal spheres $\{\Sigma_t\}_{t \in (-\delta,\delta)}$ which can be obtained as the limit interface of a family $\{u_\e\}$ of solutions of the Allen-Cahn equation on $(S^n,\tilde g)$ (with any positive integer multiplicity).
\end{named}

We refer to Section \ref{frankel_rotational} for the precise statement. Additionally, we will construct examples of metrics $\tilde g$ with these properties as warped product metrics on an interval $[0,R]\times S^{n-1}$ that degenerate at $\{0\} \times S^{n-1}$ and $\{R\} \times S^{n-1}$ to give rise to rotationally symmetric metrics on $S^{n}$ which can be intuitively pictured as a finite cylinder capped with nonnegatively curved disks. The minimal hypersurfaces $\Sigma_t$ for $t \in (-\delta, 0) \cup (0, \delta)$ provide examples of connected, degenerate, minimal hypersurfaces which cannot be given as limit interfaces associated to solutions of the Allen-Cahn equation. See Remark \ref{nonsymmetric} for a possible generalization to a class of metrics with nonnegative Ricci curvature without reflexive or rotational symmetry. \smallskip

In summary, we present a rough idea of why solutions to the Allen-Cahn equation \eqref{AC} are more rigid compared to minimal hypersurfaces. Minimal hypersurfaces only rely on local geometry near the hypersurfaces, whereas solutions to \eqref{AC} are affected by the global geometry of the ambient manifold. To further elaborate, in the example we constructed in \ref{thma}, locally the minimal hypersurfaces $\{t\} \times S^n$, where $t \in (-\delta, \delta)$, are essentially identical modulo translations. However, the nonnegativity of the Ricci curvature ensures that there exists at most one minimal sphere $\{t\} \times S^n$ that can be approximated by solutions to \eqref{AC}, and the reflexive symmetry indicates that the middle sphere $\{0\} \times S^n$ is the only one. In fact, by property (i) of \ref{thma}, one can construct $\mathrm{O}(n)$-invariant solutions $v_\e$ of the Allen-Cahn equation on $(S^n,\tilde g)$, for sufficiently small $\e>0$, which vanish precisely along the middle sphere $\Sigma_0$ by using the results of Brezis-Oswald \cite{BO} and by an odd reflection -- a similar construction can be found in \cite{caju2022ground}.

We mention here that the construction of Neumann Allen-Cahn solutions that approximate \emph{degenerate} line segments in bounded planar domains which contain a rectangle was carried out by Kowalczyk \cite{KowalczykThesis} and Alikakos-Kowalczyk-Fusco \cite{AFK}, with a control on the location of the corresponding nodal set (under some curvature assumptions on the boundary of the domain near the corners of the rectangles). Intuitively, the metrics studied here can be thought as the closed analogue of these domains -- see also Remark \ref{nonsymmetric} below.

\begin{rmk}
The results of \cite{mantoulidis2022double} show that the global isometry requirement in \cite{caju2019solutions} cannot be relaxed to a \emph{local} integrability condition. The examples given by our \ref{thma} show that this remains true even if one assumes that the minimal hypersurface is connected.\medskip
\end{rmk}

One of the central ingredients in the construction is the \emph{Frankel-type} property for solutions of the Allen-Cahn equation on compact manifolds with nonnegative Ricci curvature, which was first proved by F. Hiesmayr \cite[Proposition 6]{H} for compact manifolds of positive Ricci curvature.

\begin{named}{Theorem (Frankel Property)}\label{nonnegative Ricci FP}
Let $(M^n,g)$ be a compact Riemannian manifold with $\Ric_g \geq 0$. If $u_1$ and $u_2$ are distinct solutions of \eqref{AC} with $u_1 \leq u_2$, then one of the solutions is constant. 
\end{named}

In this regard, \ref{thma} is related to the strong rigidity of nonintersecting minimal hypersurfaces in compact Riemannian manifolds of nonnegative Ricci curvature proved by P. Petersen and F. Wilhelm in \cite{PetersenWilhelm}.\medskip

Next, we study the min-max width and the least area minimal hypersurface of $(S^n, \tilde g)$. It follows from the maximum principle and the monotonicity formula that the spheres $\Sigma_t$, for $t \in (-\delta, \delta)$, represent the minimal hypersurface of $(S^n, \tilde g)$ with the least area. On the other hand, the family of spheres $\{\Sigma_t\}_{t \in [-1,1]}$ provides a sweepout of $(S^n,\tilde g)$ in either the sense of Simon-Smith or Almgren-Pitts, see \cite{ColdingDeLellis,MarquesNevesSurvey}. This sweepout has any of the minimal spheres $\Sigma_t$, for $t \in [-\delta,\delta]$, as the maximum for the area functional, and thus provides an optimal sweepout with respect to the width associated to its homotopy class (see Proposition \ref{width}). In light of the comparison between the phase-transitions and the Almgren-Pitts min-max approaches for the area functionals recently proved by Dey in \cite{Dey}, we will show:

\begin{named}{Theorem B} \label{thmb}
There exists a dimensional constant $R=R(n)>0$ with the following property. Let $\tilde g$ be a Riemannian metric on $S^n$ as in Theorem \ref{thma}. If $r>R(n)$, then there exists a $\Z_2$-homotopy class $\Pi$ of continuous maps $X \to \mathcal{Z}_{n-1}(S^n,\tilde g;\mathcal{F};\Z_2)$ into the space of $(n-1)$-cycles mod $2$ such that
    \[\mathbf{L}(\Pi) = \omega_1(S^n,\tilde g) \quad \text{and} \quad \{\Sigma_0\} = \mathbf{C}_{AC}(\tilde \Pi) \subsetneq \mathbf{C}_{AP}(\Pi),\]
where $X$ is a cubical complex with nontrivial double cover $\tilde X \to X$, and $\tilde \Pi$ denote the space of all $\Z_2$-equivariant maps $\tilde X \to W^{1,2}(S^n)\setminus\{0\}$.
\end{named}

We refer to section \ref{min-max} for notation. As a consequence of \ref{thmb}, we get:

\begin{named}{Corollary} \label{corollary}
Let $\tilde g$ be a Riemannian metric on $S^n$ as in \ref{thmb}. There exists $\e_0>0$ such that, for all $\e \in (0,\e_0)$, the symmetric solution $v_\e$ is the only least energy nonconstant solution of the Allen-Cahn equation on $(S^n,g)$. 
\end{named}

\begin{rmk} \label{index}
Let $\{u_j\}$ be a sequence of nonconstant solutions of \eqref{AC} with $\e=\e_j \searrow 0$ in a compact (oriented) Riemannian manifold $(M,g)$ with $\Ric_g \geq 0$ which has as its associated limit interface the multiplicity one embedded, two sided, minimal hypersurface $\Sigma \subset M$.  

If $\Sigma$ is stable, then it follows from the second variation formula that any constant function is in the kernel of its Jacobi operator, and hence $\Sigma$ has nullity 1. On the one hand, by the rigidity results of \cite{farina2013stable} for stable solutions of semilinear equations in manifolds of nonnegative Ricci curvature, we see that the solutions $u_\e$ cannot be stable, and hence $\ind(u_\e) \geq 1$. On the other hand, by the upper semicontinuity of the spectrum of the linearized Allen-Cahn equation \cite{ChodoshMantoulidis},
        \[\ind(u_\e)+\nul(u_\e) \leq \ind(\Sigma) + \nul(\Sigma) = 1.\]
Therefore, $u_\e$ must be a nondegenerate index 1 solution. This conclusion can be applied to the symmetric solutions $v_\e$ in $(S^n,\tilde g)$, providing a degenerate example where the second-order convergence of the Allen-Cahn energy functional to the area functional -- that is, the codimension one volume -- does not hold.
\end{rmk}

As previously noted in \cite{mantoulidis2022double}, these rigidity phenomena are compatible with the stronger convergence results in the context of phase transitions, such as \cite{ChodoshMantoulidis,CMSurfaces}, in comparison with the solution of the Multiplicity One Conjecture by X. Zhou \cite{Zhou} for the Almgren-Pitts min-max, and by Wang-Zhou \cite{wangzhouexistence} for the Simon-Smith min-max -- and also the related results by Calabi-Cao \cite{Calabicao} and Aiex \cite{Aiex} in dimension 2. \bigskip

Finally, we mention some related questions and examples. Motivated by the geometric picture, we expect that any constant $r > 0$ should work in \ref{thma} (ii), and it will be true if the following question is answered positively, as indicated by Remark \ref{comparehalfsphere}.

\begin{que}

Let $f: S^n \to [1,\infty)$ be a smooth function, and consider the smooth manifold $S^n_f = \{f(x) x \mid x \in S^n\} \subset \R^{n+1}$, endowed with the metric induced by $\R^{n+1}$. 

\begin{enumerate}
\item[(i)] Is the $1$-width of $S^n_f$ minimized (among such smooth functions) if and only if $f \equiv 1$?

\item[(ii)] Among those functions $f$ for which $S^n_f$ has nonnegative Ricci curvature, is the area of the least area minimal hypersurface in $S^n_f$ minimized if and only if $f \equiv 1$?
\end{enumerate}

\end{que}

\begin{figure}[ht]
\centering
\includegraphics[width=.2\textwidth]{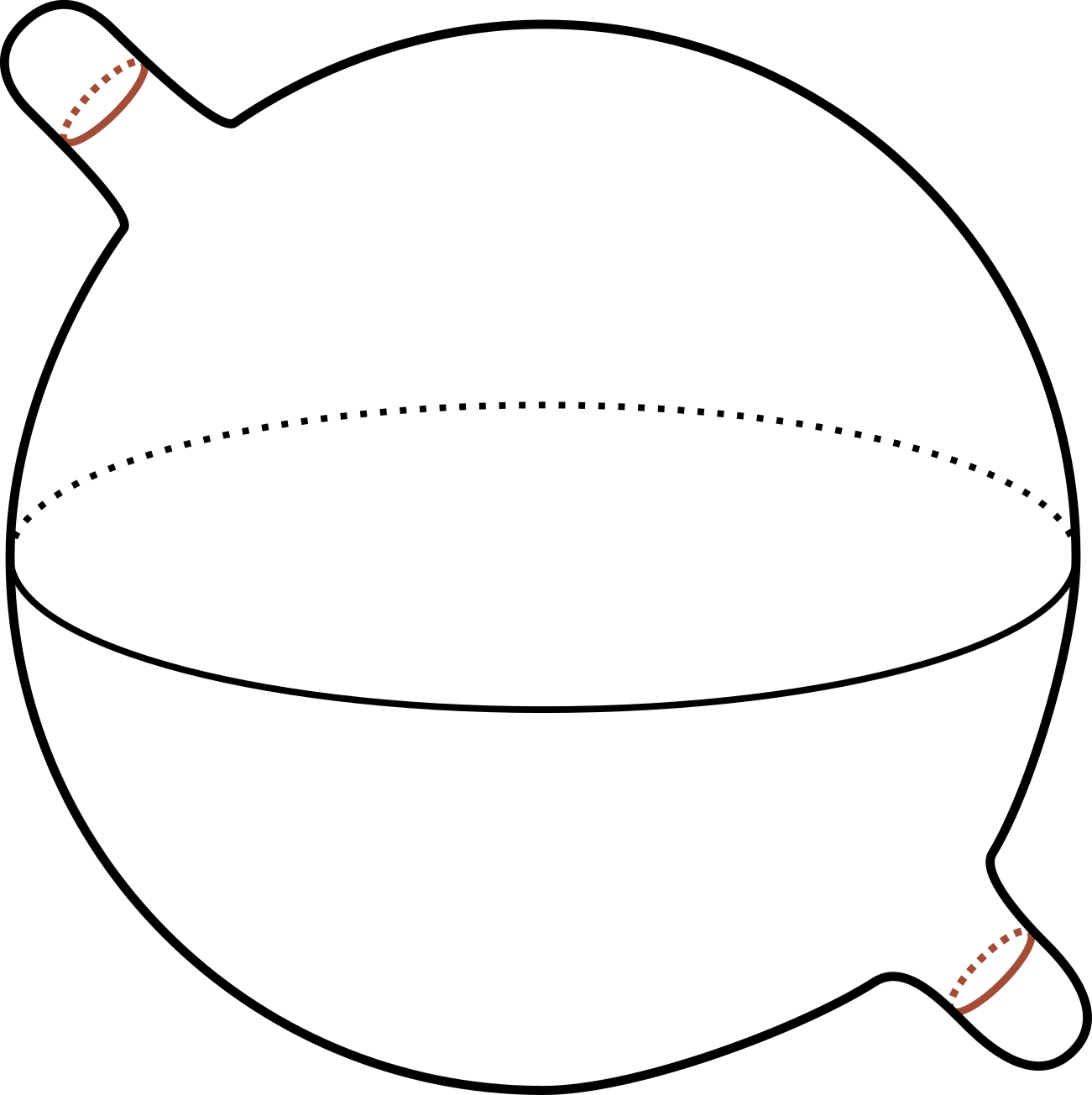}
\caption{Spherical graph with low area minimal surfaces}
\label{graph}
\end{figure}

Roughly speaking, the above question asks that if we have a larger manifold containing $S^n$ that is a graph over $S^n$, does the area of the least area minimal hypersurface and $1-$width also increase? The curvature condition in part $(ii)$ is used to rule out the possibility of low area stable minimal surfaces, see the picture below. We also note that by the characterization of least area minimal surfaces by L. Mazet and H. Rosenberg \cite{mazetrosenberg} and the work of R. Souam \cite{Souam}, if $n=3$ then either the area of this surface equals the $1$-width of the ambient manifold, or it is a stable surface which is homeomorphic to a torus or a sphere and the Ricci curvature vanishes along the normal bundle of the surface. We also mention the related inequalities for the normalized width of conformally flat $3$-spheres by Ambrozio-Montezuma \cite{AmbrozioMontezuma}.\medskip

It is an interesting problem to study conditions under which we can obtain the uniqueness or rigidity of the Allen-Cahn approximation of a given minimal hypersurface. Guaraco-Marques-Neves \cite{guaraco2019multiplicity} proved the Allen-Cahn approximation of a separating, nondegenerate minimal hypersurface is unique. This is proved by showing that the multiplicity-one convergence implies strong (graphical, $C^{2,\alpha}$) estimates, which means that these solutions eventually (for sufficiently small $\e$) belong to the function space used by Pacard \cite{Pacard} (see also Pacard-Ritor\'e \cite{pacard2003constant}) to employ fixed point methods, and hence are unique. Hiesmayr \cite{H} proved the rigidity for equatorial spheres certain Clifford hypersurfaces in the round $n$-sphere, up to isometry. 

To be precise, the rigidity problems consist in showing that there exists at most one solution to \eqref{AC} (up to isometry) with a given minimal hypersurface as its limit interface. Geometrically, this problem can be compared to the question: does the set of areas of all (embedded) minimal hypersurface have accumulation points? We also mention that other accumulation phenomena for minimal hypersurfaces were recently studied by Song-Zhou \cite{song2021generic}.

In contrast, we present an example related to the main construction of the article that shows that a \emph{degenerate} minimal hypersurface may have two distinct Allen-Cahn approximations (modulo isometry). Once again, this example indicates that the solutions to the Allen-Cahn equation on a manifold depend on the global geometry.\bigskip

\noindent \textbf{Example.} Consider the warped product metric
    \[g = dt^2 + f(t)^2 \cdot g_{{round}} \]
on the cylinder $M=\R \times S^n$, where $f$ is a positive $C^2$ function defined by $f(0)=0$ and $$f(t) = t^{6}\cdot\left(\sin\left(\frac{2\pi}{t}\right)+1\right)+1,$$  and $g_{{round}}$ is the round metric on $S^n$. Since $$f'(t)= t^4 \left(6t\left(\sin\left(\frac{2\pi}{t}\right) + 1\right) - 2\pi \cos \left(\frac{2\pi}{t}\right) \right),$$
there exists an increasing sequence of negative critical points $t^-_i \nearrow 0$, and a decreasing sequence of positive critical points $t^+_i \searrow 0$. By the computations of \cite[p. 206, Proposition 35]{ONeill}, the spheres $\{t^{\pm}_i\} \times S^n$ are totally geodesic and hence minimal. Moreover, when $i$ is sufficiently large, we have $\sin\left(\frac{2\pi}{t^{\pm}_i}\right) \simeq 1$, therefore $f''(t^{\pm}_i)\neq 0$ and the minimal hypersurface $\{t^{\pm}_i\} \times S^n$ is nondegenerate. Thus $\{t^{\pm}_i\} \times S^n$ can be approximated by solutions of \eqref{AC} by the result of Pacard and Ritor\'e \cite{pacard2003constant}.

Now we study the Allen-Cahn approximation of the hypersurface $\{0\} \times S^n$. By considering variations by parallel spheres $\{t\} \times S^n$ of radius $f(t)$, we see that $\{0\} \times S^n$ is a degenerate stable minimal hypersurface since it supports a positive Jacobi field. If we consider the Allen-Cahn approximation of the nondegenerate minimal hypersurfaces $\{t^{\pm}_i\} \times S^n$, by a diagonal sequence argument, we get two Allen-Cahn approximations, one is extracted from $\{t^{+}_i\} \times S^n$, and another is extracted from $\{t^{-}_i\} \times S^n$. From the choice of our function $f$, there is no reflexive symmetry about $0$, hence these two Allen-Cahn approximations are distinct.

\subsection*{Acknowledgements}

PG was supported by ANID (Agencia Nacional de Investigaci\'on y Desarrollo, Chile) FONDECYT Iniciaci\'on grant.

\section{The Frankel property for the Allen-Cahn equation and rotational metrics} \label{frankel_rotational}

In this section, we construct Riemannian metrics on $S^n$ for which there exists a connected, separating, minimal hypersurface that cannot be obtained as the limit interfaces of solutions of the Allen-Cahn equation (with any positive integer multiplicity). We start with a few useful results about solutions of this equation.

\subsection{The Frankel-type property}

We say that a Riemannian manifold $(M^n,g)$ satisfies the \emph{Frankel property} if any two closed, two-sided, smooth, embedded minimal hypersurfaces of $(M^n,g)$ intersect each other. This holds, for instance, whenever $\Ric_g>0$, and it is a useful tool in the study of minimal surfaces, see e.g. \cite{MarquesNevesRicci}. We also note that if $\Ric_g \geq 0$, then the existence of nonintersecting minimal surfaces imposes a strong rigidity phenomenon, as proved by Petersen-Wilhelm \cite{PetersenWilhelm}, see also Choe-Fraser \cite{ChoeFraser}

In \cite{H}, F. Hiesmayr proved an analog of the Frankel-type property of solutions of the Allen-Cahn equation in compact manifolds of positive Ricci curvature. We will need to extend his result to the nonnegative Ricci curvature setting, as stated in the introduction. We follow the same proof as in \cite[Proposition 6]{H} coupled with the classification of stable solutions for certain semilinear elliptic PDEs under nonnegative Ricci curvature assumption, as demonstrated by A. Farina, Y. Sire, and E. Valdinoci \cite{farina2013stable}. We include the complete proof here for completeness. \medskip

\begin{proof}[Proof of \ref{nonnegative Ricci FP}]
By Serrin's maximum principle (see \cite[Proposition 7.2]{guaraco2019multiplicity}), we must have $u_1 < u_2$. By \cite[Theorems 1 and 2]{farina2013stable}, a stable solution of \eqref{AC} in a compact Riemannian manifold with nonnegative Ricci curvature is constant. Therefore, if we assume $u_1$ is not constant, then it is unstable. This means that we can find a positive function $\varphi$ such that $\Delta \varphi-\e^{-2}W''(u_1)\varphi +\lambda_1 \varphi =0$ for some $\lambda_1<0$, and hence $u_1+\theta\varphi$ is a subsolution for sufficiently small $\theta \in (0,1)$ (depending on $\varphi$). In fact, we can argue as in \cite{guaraco2019multiplicity} to see that $|W'(t)-W'(s)-W''(s)(t-s)| \leq C (t-s)^2$ for some $C>0$ depending on $W$ only, and hence 
    \[|W'(u_1+\theta \varphi) - W'(u_1) - W''(u_1)\theta \varphi|\leq C(u_1+\theta\varphi - u_1)^2 = C\varphi^2 \theta^2 \]
and
    \begin{align*}
        &\Delta(u_1+\theta\varphi)-\e^{-2} W'(u_1+\theta\varphi) \\
        &\qquad = \Delta u_1 -\theta \lambda_1 \varphi - \e^{-2}\left(W'(u_1 + \theta\varphi) - W''(u_1)\theta\varphi\right) \\
        &\qquad \geq  \Delta u_1 -\theta \lambda_1 \varphi - \e^{-2} W'(u_1) -C\e^{-2}\varphi^2\theta^2= \theta\varphi(- \lambda_1 -C\e^{-2}\varphi\theta )
    \end{align*}
which is nonnegative provided $\theta < \e^2(-\lambda_1)/(C\|\varphi\|_{\infty})$. Additionally, we can choose $\theta$ sufficiently small so that $u_1 + \theta \varphi < u_2$ everywhere on $M$. By the maximum principle for parabolic equations, the solution $u(\cdot,t)$ of the parabolic Allen-Cahn equation with initial data $u(\cdot,0) = u_1+\theta \varphi$ satisfies $u(\cdot,t)<u_2$ everywhere and $M$ and for all $t\geq 0$. Moreover, it is strictly increasing with respect to $t$, since the time derivative $\partial_t u$ solves the linearized parabolic equation and 
    \[\partial_t u(\cdot, 0) = \e \Delta(u_1+\theta\varphi) - \e^{-1}W'(u_1+\theta\varphi) \geq 0,\]
and this implies the strict inequality $\partial_t u(\cdot, t)>0$ using the maximum principle once again. Thus (see e.g. \cite[Lemma 2.3]{GasparGuaraco}), it subconverges as $t \to +\infty$ to a solution $u_+$ of \eqref{AC} with 
    \[u_1 < u_1+\theta \varphi< u_+ \leq u_2.\] 
As noted in \cite{H}, the solution $u_+$ must be stable. Otherwise, we can argue as we did for  $u_1+\theta \varphi$ to show that if $\psi$ is the first positive eigenfunction for the linearized operator $L_{\e,u_+}$ and if $\delta>0$ is sufficiently small, then $u_+-\delta \psi$ is a supersolution of \eqref{AC}. By choosing $\delta$ so that $u_1+\theta\varphi<u_+-\delta\psi$, we reach a contradiction, since the solution $v$ of the parabolic Allen-Cahn equation having $u_+-\delta\psi$ as its initial datum is strictly decreasing and hence acts as a barrier to prevent $u(\cdot,t) \to u_+$ (subsequentially), for we have 
    \[u_1 + \theta\varphi = u(\cdot,0) < u(\cdot,t) < v(\cdot,t) < v(\cdot,1) <u_+ -\delta \psi < u_+, \quad \text{for all} \ t\geq 1.\]
Therefore $u_+$ is stable and, using the nonnegativity of Ricci curvature again, it must be constant. Using $u_1 <u_+ \leq u_2 \leq 1$ and that $u_1$ cannot have a constant sign, we conclude that  $u_2$ is the constant solution $+1$.
\end{proof}

We will also use the following comparison result, proved by Guaraco-Marques-Neves, which is a consequence of Serrin's maximum principle \cite{HanLin}:

\begin{lem}(\cite[Corollary 7.4]{guaraco2019multiplicity}) \label{comparison}
Suppose $u,v$ are solutions to \eqref{AC} with parameter $\e > 0$ on an open bounded domain $\Omega \subset M$. If $v$ is continuous and positive on $\overline\Omega$ and $u = 0$ on $\partial \Omega$, then $v > u$ on $\overline \Omega$.
\end{lem}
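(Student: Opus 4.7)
The plan is to combine a sliding/scaling argument with the strong maximum principle, exploiting the strict positivity of $v$ to compare it against $u$ through multiplicative dilations. First I would reduce the problem to showing $v \geq u$ on $\overline{\Omega}$; upgrading to strict inequality will be a separate, easier step. Since $v \geq v_0 > 0$ on the compact set $\overline{\Omega}$ and $u$ is bounded there, the scaled function $tv$ dominates $u$ for all sufficiently large $t > 0$, so the quantity
\[
t^* = \inf\{t > 0 \mid tv \geq u \text{ on } \overline{\Omega}\}
\]
is well-defined and finite. By continuity, $t^* v \geq u$ on $\overline{\Omega}$ with equality at some contact point $x_0$. Since $t^* v > 0$ on $\partial \Omega$ while $u = 0$ there, the point $x_0$ must lie in the interior of $\Omega$.

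The main step is then to show $t^* \leq 1$. Assuming for contradiction that $t^* > 1$, the function $t^* v - u$ attains its minimum value $0$ at the interior point $x_0$, so $\Delta(t^* v - u)(x_0) \geq 0$. Using that $u$ and $v$ both solve \eqref{AC}, this rewrites as
\[
t^* W'(v(x_0)) \geq W'(t^* v(x_0)).
\]
Setting $s = v(x_0) > 0$, this inequality contradicts the key algebraic feature of the Allen-Cahn nonlinearity, namely $W'(ts) > t W'(s)$ for $t > 1$ and $s > 0$ — a direct computation for the standard quartic potential yields $W'(ts) - tW'(s) = t(t^2 - 1)s^3 > 0$. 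I expect this superlinearity property to be the main conceptual obstacle, in the sense that the whole argument hinges on choosing the right comparison (multiplicative rather than additive) so that the nonlinearity works in one's favor; naive comparison of $u$ and $v$ directly through the maximum principle fails because $W'$ is not monotone.

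Once $t^* \leq 1$ is established, the inequality $v \geq t^* v \geq u$ holds on $\overline{\Omega}$; if $t^* < 1$ then $v > u$ immediately from $v > 0$. In the borderline case $t^* = 1$, I would write $w = v - u \geq 0$ and observe that $w$ satisfies a linear elliptic equation
\[
\Delta w - c(x) w = 0, \qquad c(x) = \e^{-2}\int_0^1 W''(\tau v + (1-\tau) u)\, d\tau,
\]
with bounded coefficient $c$. Since $w > 0$ on $\partial \Omega$, any interior zero of $w$ would force $w \equiv 0$ on its connected component by Serrin's strong maximum principle, contradicting the boundary values. Hence $w > 0$ throughout $\overline{\Omega}$, completing the proof.
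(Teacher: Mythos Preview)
The paper does not actually supply a proof of this lemma: it is quoted from \cite[Corollary 7.4]{guaraco2019multiplicity} with only the remark that it ``is a consequence of Serrin's maximum principle \cite{HanLin}.'' So there is no in-paper argument to compare yours against.

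That said, your proof is correct and is the standard Brezis--Oswald sliding argument, which is very likely what the cited reference does as well. Two small remarks. First, the case $t^*=0$ (which occurs precisely when $u\le 0$ throughout) should be disposed of before invoking a contact point, since then no contact point need exist; but this case is trivial because $v>0\ge u$. Second, as you already flag, the strict inequality $W'(ts)>t\,W'(s)$ for $t>1$, $s>0$ is equivalent to $s\mapsto W'(s)/s$ being strictly increasing on $(0,\infty)$; you verify it for the model quartic, but for a general double-well potential it is an extra structural hypothesis (the Brezis--Oswald condition). In the setting of this paper one always has $|u|,|v|\le 1$ from the global maximum principle on the closed manifold, and the usual potentials satisfy this monotonicity, so the point is harmless---but it is worth stating as an assumption if you write the argument for general $W$. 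Your final step, linearizing $v-u$ and invoking the strong maximum principle to rule out an interior zero, is exactly the ``Serrin'' ingredient the paper alludes to; note that no sign condition on $c(x)$ is needed there because the minimum value in question is zero.
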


\subsection{Construction of the metric}

First, we construct a suitable symmetric Riemannian metric on $S^n$ which contains an open region isometric to a finite open cylinder.

\begin{prop} \label{metric}
There exist $\mathrm{O}(n)$-invariant Riemannian metrics $\tilde g$ on $S^n$ with $\Ric_{\tilde g} \geq 0$ and the following properties:
\begin{enumerate}
    \item[(i)] The reflection $(x_1,x_2\cdots,x_{n+1}) \mapsto (-x_1,x_2\cdots,x_{n+1})$ is an isometry of $(S^n,\tilde g)$.
    \item[(ii)] There exist $\delta \in (0,1)$ and $r>0$ such that $\{x \in S^n \mid |x_1|\leq\delta\}$ is isometric to a finite cylinder $[-r,r] \times S^{n-1}$. In particular, for any $t \in (-\delta,\delta)$, the spheres $\Sigma_t=\{x_1=t\}$ are degenerate stable minimal (in fact, totally geodesic) hypersurfaces in $(S^n,\tilde g)$.
\end{enumerate}
\end{prop}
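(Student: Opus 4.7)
The plan is to realize $(S^n, \tilde g)$ as an $O(n)$-symmetric warped product $\tilde g = dt^2 + h(t)^2 g_{S^{n-1}}$ on $[-T, T] \times S^{n-1}$, with $t = \pm T$ collapsed to two poles. Smoothness of the resulting metric on $S^n$ requires $h(\pm T) = 0$, $h'(\pm T) = \mp 1$, and the vanishing of all even-order derivatives of $h$ at $\pm T$; the $O(n)$-invariance (rotating the fiber) and the reflection symmetry on $S^n$ (coming from $h(-t) = h(t)$) are automatic, and the cylindrical region will correspond to an interval on which $h$ is constant.

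The main design step is to parametrize $h$ through a phase function, writing $h(t) = \rho\cos(\phi(t)/\rho)$ for some $\rho > 0$ and a smooth, nondecreasing, even $\phi: [-T, T] \to [0, \rho\pi/2]$. This substitution converts the three requirements on $h$ into easy-to-satisfy conditions on $\phi$: \emph{first}, $\phi \equiv 0$ on $[-r, r]$ (for some fixed $r > 0$) gives $h \equiv \rho$ on this interval, hence an isometric copy of the product cylinder $[-r, r] \times S^{n-1}$; \emph{second}, $\phi' \in [0, 1]$ and $\phi'' \geq 0$ yield
\[
h'' = -\phi''\sin(\phi/\rho) - (\phi')^2 \cos(\phi/\rho)/\rho \leq 0 \quad \text{and} \quad |h'| = \phi' \sin(\phi/\rho) \leq 1;
\]
\emph{third}, $\phi'(t) \equiv 1$ for $t$ near $T$ gives $h(t) = -\rho\sin((t-T)/\rho)$ there, whose Taylor series at $T$ contains only odd powers of $(t - T)$, which is the standard smoothness criterion at the pole. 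A concrete $\phi$ satisfying all three is built by integrating a smooth nondecreasing cutoff $\mu: \mathbb{R} \to [0, 1]$ with $\mu = 0$ on $(-\infty, 0]$, $\mu = 1$ on $[1, \infty)$, and all derivatives of $\mu$ vanishing at $0$ and $1$: set $\phi'(t) = 0$ on $[0, r]$, $\phi'(t) = \mu((t-r)/L_0)$ on $[r, r + L_0]$, and $\phi'(t) = 1$ on $[r + L_0, T]$, with $L_0$ and $T$ chosen so that $\phi(T) = \rho\pi/2$. The vanishing of all derivatives of $\mu$ at the endpoints of $[0, 1]$ yields $\phi \in C^\infty$.

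With this $h$ in hand, $\Ric_{\tilde g} \geq 0$ follows from O'Neill's warped product formulas
\[
\Ric_{\tilde g}(\partial_t, \partial_t) = -(n-1)\frac{h''}{h}, \qquad \Ric_{\tilde g}(X, X) = -\frac{h''}{h} + (n-2)\frac{1 - (h')^2}{h^2}
\]
for any unit vector $X$ tangent to an $S^{n-1}$ fiber, since both right-hand sides are nonnegative by the bounds $h'' \leq 0$ and $(h')^2 \leq 1$. In the cylindrical region $\{|t| \leq r\}$ the spheres $\Sigma_t$ are totally geodesic and minimal as level sets in a Riemannian product, and the Killing field $\partial_t$ induces the constant positive Jacobi field on each of them, so they are degenerate stable.

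The principal design issue is that the naive piecewise model (cylinder joined to spherical cap) has an $h''$ jump at the interface, and a direct smoothing by convolution or by a partition-of-unity interpolation is delicate to carry out while simultaneously maintaining $h'' \leq 0$ and the prescribed Taylor data at the poles. The phase-function substitution $h = \rho\cos(\phi/\rho)$ sidesteps this cleanly, replacing the concavity, boundedness, and polar-smoothness constraints by independent monotone conditions on $\phi$ that hold for any reasonable nondecreasing convex cutoff.
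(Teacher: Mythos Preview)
Your argument is correct and follows the same overall strategy as the paper---build an $O(n)$-symmetric warped product $dt^2 + h(t)^2\,g_{S^{n-1}}$ with $h$ concave, $|h'|\le 1$, $h$ constant on a central interval, and the correct Taylor data at the poles---but the \emph{construction} of $h$ is genuinely different. The paper takes the warping function directly as the primitive of an explicit even bump $\phi(x)=e^{-x^2/(a^2-x^2)}$ (normalized so that $\int_0^a\phi=1$): this gives $\rho'(0)=1$ with all even derivatives of $\rho$ vanishing at $0$ (pole smoothness), $\rho'=\phi\in[0,1]$ and $\rho''=\phi'\le 0$ (nonnegative Ricci), and $\rho\equiv 1$ for $x\ge a$ (cylindrical region); it then reflects across the totally geodesic boundary $\{R\}\times S^{n-1}$. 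Your phase substitution $h=\rho\cos(\phi/\rho)$ is a neat alternative: it converts the concavity and derivative bounds on $h$ into the decoupled monotone constraints $\phi'\in[0,1]$, $\phi''\ge 0$, and yields a metric that is \emph{exactly} round near each pole (since $h$ equals a shifted sine there), making the pole-smoothness verification automatic. The paper's route is shorter and more elementary---one explicit bump and an antiderivative---while yours buys the round-cap normal form at the cost of the extra layer of parametrization. Two small clean-ups: ``nondecreasing, even $\phi$'' should read ``even, nondecreasing on $[0,T]$'' (and correspondingly $|h'|=|\phi'|\sin(\phi/\rho)$), and to match the statement literally one should take the fiber radius $\rho=1$ and note that $L_0$ must be chosen small enough that $T>r+L_0$.
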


\begin{proof}
Consider the following smooth, nonnegative, strictly decreasing function:
    \[\phi(x) = \left\{ \begin{array}{cl} e^{-\frac{x^2}{a^2-x^2}}, & \text{for} \ 0\leq x<a,\\  0, &\text{for}\ x \geq a \end{array}\right.,\]
where $a\in(1,2)$ is the unique number such that $\int_0^a e^{-\frac{x^2}{a^2-x^2}} \,dx=1$. We will use in our construction the fact that $0\leq \phi \leq 1$, and all derivatives of odd order of $\phi$ vanish at $t=0$.

Let $\rho$ be a primitive of $\phi$ with $\rho(0)=0$, that is, the nonnegative smooth function defined for $x \geq 0$ by
    \[\rho(x) ={\textstyle \int_0^x \phi(t)\,dt},\]\medskip
and consider, for $R>a$, the warped product metric $\tilde g=dt^2+\rho^2(t)ds_{n-1}^2$ on $(0,R]\times S^{n-1}$. Here $ds^{n-1}$ denotes the round metric of curvature $1$ on $S^{n-1}$. We claim that this induces a metric on a hemisphere in $S^{n}$ which is isometric to a cylinder near the boundary, and hence can be reflected to obtain a metric on $S^n$ with the required properties.

First, note that $\rho(0) = 0$ and $\rho' = \phi$, hence $\rho'(0) = 1$, and all even-order derivatives of $\rho$ vanish at $t=0$. Therefore, see e.g. \cite[Section 1.4.4]{petersen2006riemannian}, it defines a smooth metric on the hemisphere (by identifying $0\times S^{n-1} \subset [0,R]\times S^{n-1}$ to a single point). Moreover, we have  $\rho(x) \equiv 1$ for $x \geq a$, and hence $\tilde g=dt^2 + ds_{n-1}^2$ on $[a,R]\times S^{n-1}$, which is the product metric on the cylinder $[a,R] \times S^{n-1}$. In particular, the boundary $\{R\} \times S^{n-1}$ is totally geodesic with respect to the metric $\tilde g$. Therefore, one can use the reflection with respect to this boundary to extend the metric smoothly to the double of the hemisphere, which is diffeomorphic to $S^n$.

Second, by a direct computation (see e.g. \cite[Section 4.2.3]{petersen2006riemannian}), we have the following formula of Ricci curvature:
\begin{equation*}
\begin{aligned}
&\text{Ric} (\partial_t) = - (n-1) \frac{\rho''}{\rho} \partial_t, \\
&\text{Ric} (X) = \left( (n - 2) \frac{1-(\rho')^2}{\rho^2} - \frac{\rho''}{\rho}\right) X, \quad \text{if} \ X \  \text{is tangent to} \ S^{n-1}.
\end{aligned}
\end{equation*}

Since $\rho'' = \phi' \leq 0$ and $\rho' = \phi \in [0,1]$, we have $-\frac{\rho''}{\rho} \geq 0$ and $\frac{1-(\rho')^2}{\rho^2} \geq 0$. Therefore, the Ricci curvature of the metric $\tilde g$ is nonnegative.
\end{proof}

\begin{rmk} \label{positivity}
By the construction of the warping function $\rho$, we note that $\rho'(t)=1$ only at $t=0$, and $\rho''(t)=0$ only at $t=0$ and for $t\geq a$. Moreover, a direct computation shows that:
    \[\lim_{t \to 0^+} \frac{1 -\rho'(t)^2}{\rho(t)^2} = \lim_{t \to 0^+} \frac{-\rho''(t)}{\rho(t)} = \lim_{t \to 0^+}\frac{-\phi''(t)}{\phi(t)} = \frac{2}{a^2}.\]
Therefore, $\Ric_{\tilde g}(X,X)=0$ for a tangent vector $X \in T_pS^n$ if and only if $p$ lies in the domain $\{x \in S^n \mid |x_1|\leq \delta\}$ and $X = {\partial_t}|_p$. 

In addition, the scalar curvature is given by $-2(n-1) \frac{\rho''}{\rho} + (n-1)(n-2) \frac{1-(\rho')^2}{\rho^2}$, so the metric $\tilde g$ has positive scalar curvature.
\end{rmk}

\begin{rmk} \label{comparehalfsphere}
Numerical computation shows that the constant $a \simeq 1.65714 > \frac{\pi}{2}$. We observe that the two caps $\{x \in S^n \mid \pm x_1 > \delta\}$ are slightly larger than half-spheres by comparing the warped metric constructed above and the round metric. In order to prove this observation, we only need to show that the graph of $\rho(t)$ over $[0,a]$ is on top of the graph of $\sin (x - a + \frac{\pi}{2})$ over $[a - \frac{\pi}{2}, a]$. Let $f(b) = \int_0^{a-\frac{\pi}{2} + b} e^{-\frac{x^2}{a^2-x^2}} dx - \sin b$, $b \in [0, \frac{\pi}{2}]$, we want to show that $f(b) \geq 0$. Since $f(\frac{\pi}{2}) = 0$, it is suffice to prove that $f'(b) \leq 0$. 
By basic calculus, $\frac{2a^2 (a-\frac{\pi}{2} + b)}{(a^2 - (a-\frac{\pi}{2} + b)^2)^2} \geq \tan b$ for $b \in [0,\frac{\pi}{2}]$. Thus the derivative of $e^{\frac{(a-\frac{\pi}{2} + b)^2}{a^2-(a-\frac{\pi}{2}+b)^2}} \cos b$ is nonnegative, and its value at $b = 0$ is greater than $1$, therefore $e^{\frac{(a-\frac{\pi}{2} + b)^2}{a^2-(a-\frac{\pi}{2}+b)^2}} \cos b > 1$. Hence $f'(b) = e^{-\frac{(a-\frac{\pi}{2} + b)^2}{a^2-(a-\frac{\pi}{2}+b)^2}} - \cos b \leq 0$.

\end{rmk}

\medskip

\begin{proof}[Proof of \ref{thma}]

By arguing as in \cite{caju2022ground}, and observing that the middle sphere $\{0\} \times S^1$ divides $(S^n,\tilde g)$ in two isometric domains, we can construct, for sufficiently small $\e>0$, a solution $v_\e$ of \eqref{AC} such that $\{v_\e = 0\} = \Sigma_0$. In fact, one minimizes the Allen-Cahn energy on one of the domains bounded by $\Sigma_0$, among $W^{1,2}$ functions with vanishing trace, and extends this to a solution on $(S^n,\tilde g)$ by an odd reflection with respect to $\Sigma_0$.\medskip

We will show that this is a unique minimal sphere among $\Sigma_t$, for $|t|\leq \delta$, which can be obtained as the limit interface associated to solutions of \eqref{AC}. To fix some notations, for every $t \in [-1,1]$, denote $D^t = \{x\in S^n \mid x_1<t\}$ and $D_{t} = \{x \in S^n \mid x_1>t\}$. We let $\tau$ be the reflection of $S^n$ with respect to the hyperplane $\{x_1=0\}$, i.e. $$\tau(x_1, x_2, \cdots, x_{n+1}) = (-x_1, x_2, \cdots, x_{n+1}).$$
Note that $\tau$ is an involution and an isometry of $(S^n,\tilde g)$, by Proposition \ref{metric}, and that $\tau(D_t) = D_{-t}$ and $\tau(\overline{D_t}) = \overline{D_{(-t)}}$.

Suppose, by contradiction, that there exists an AC approximation $u_{\epsilon_j}$ of a sphere $\Sigma_t$ with $t \in (0,\delta)$. By \cite[Theorem 1]{HT}, and by flipping the sign of $u_{\epsilon_j}$ if necessary, we may assume that $u_{\epsilon_j}$ converges uniformly to $+1$ in any compact subset of $D^t$. In particular, for sufficiently large $j$, we have $u_{\epsilon_j}>0$ in $\overline{D^{t/2}}$ . Denote by $\Omega^1_j$ the nodal domain of $u_{\epsilon_j}$ that contains $\overline{D^{t/2}}$ and let $\Omega_j^2 = S^n \setminus \overline{\Omega_j^1}$, so that $\overline{\Omega_j^2} \subset S^n\setminus \overline{D^{t/2}}=D_{t/2}$ for large $j$, and $u_{\epsilon_j}=0$ on $\partial \Omega_j^2$. We also note that $\partial \Omega_j^2 \subset D_{t/2}$ and $S^n \setminus \overline{\Omega_j^2} \subset \overline{\Omega_j^1}$.

\begin{figure}[h]
\centering
\includegraphics[width=.75\textwidth]{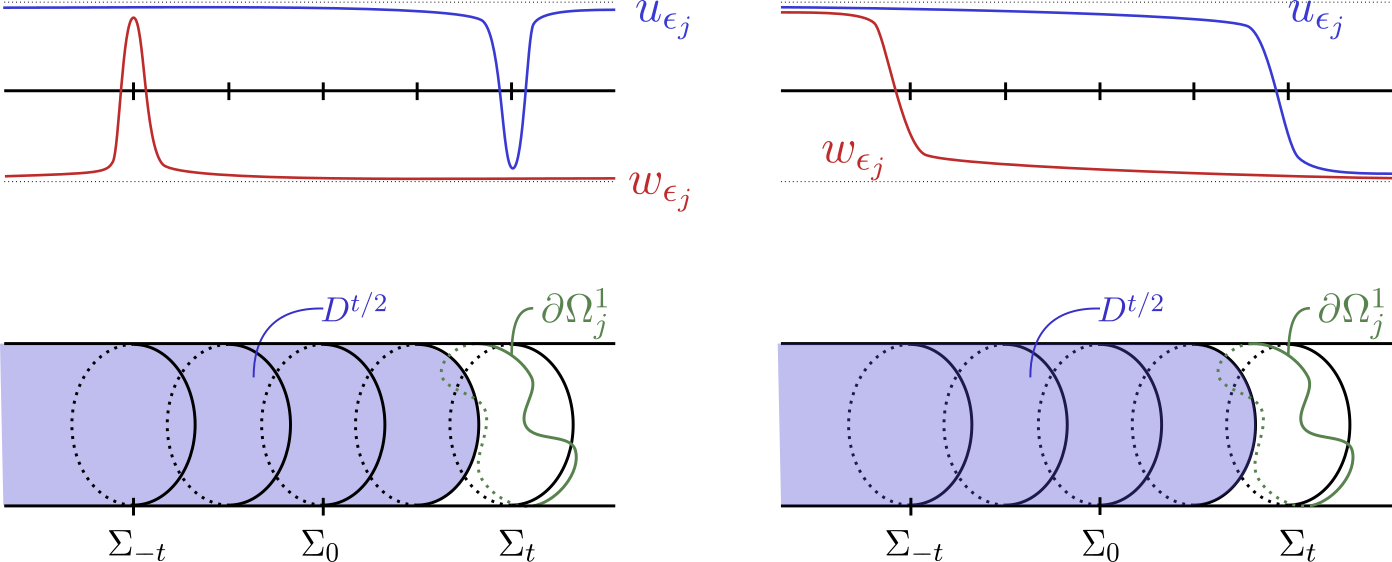}
\caption{Possible profiles of AC approximations of $\Sigma_t$ and nodal domains}
\label{pic_comparison}
\end{figure}

We aim to show that, regardless of the multiplicity that $\Sigma_t$ is obtained as a limit interface, we can use the symmetries of $(S^n,\tilde g)$ to construct another nonconstant solution $w_{\epsilon_j}$ such that $u_{\e_j} \geq w_{\e_j}$, contradicting the \ref{nonnegative Ricci FP}. Some examples of possible profiles of such solutions, with even or odd multiplicity, are pictured in Figure \ref{pic_comparison} above.\smallskip

Note that $w_{\epsilon_j}=-u_{\epsilon_j}\circ \tau$ is a solution of \eqref{AC} whose nodal set accumulates around $\tau(\Sigma_t)=\Sigma_{-t}$. Moreover, $\tau(\Omega_j^1) \supset \tau (\overline{D^{t/2}}) = \overline{D_{-t/2}}$, and $\tau(\Omega_j^2) \subset D^{-t/2}$ for large $j$. In addition, we have $w_{\epsilon_j}<0$ on the domain $\tau(\Omega_j^1)$ for large $j$, and also $S^n \setminus \overline{\tau(\Omega_j^2)} \subset \overline{\tau(\Omega_j^1)}$. 

It suffices to show that $u_{\e_j}\geq w_{\e_j}$, for sufficiently large $j$, on each of the following domains:
    \[\text{(i)} \ \overline{\Omega_j^2} \qquad  \qquad \text{(ii)} \ \overline{\tau(\Omega_j^2)}, \qquad\qquad \text{and} \qquad \qquad \text{(iii)} \ (S^n \setminus \overline{\Omega_j^2}) \cap (S^n \setminus \tau(\overline{\Omega_j^2})).\]
Then the Frankel Property for solution of \eqref{AC} implies that one of these solutions is constant, leading to a contradiction.\smallskip

\noindent (i) Since $\tau(\overline{\Omega_j^2}) \subset \tau(D_{t/2}) = D^{-t/2} \subset D^{t/2} \subset \Omega_j^1$, we know $-w_{\e_j}(x) = u_{\e_j}(\tau(x)) > 0$ for all $x \in \overline{\Omega_j^2}$, and $-u_{\e_j}(x) = 0$ for all $x \in \partial \Omega_j^2$. We conclude by Lemma \ref{comparison} that $-w_{\epsilon_j}> -u_{\epsilon_j}$ on $\overline{\Omega_j^2}$. Equivalently, $u_{\epsilon_j}>w_{\epsilon_j}$ on $\overline{\Omega_j^2}$.\smallskip

\noindent (ii) Using the fact that $\tau \circ \tau = \mathrm{id}_{S^n}$, we have $u_{\e_j}(x) = - w_{\e_j}(\tau(x)) > - u_{\e_j}(\tau(x)) = w_{\e_j}(x)$ for all $x \in \tau(\overline{\Omega_j^2})$.\smallskip

\noindent (iii) Note that $(S^n \setminus \overline{\Omega_j^2}) \cap (S^n \setminus \tau(\overline{\Omega_j^2})) \subset \overline{\Omega_j^1} \cap \overline{\tau(\Omega_j^1)}$. We know $u_{\e_j} \geq 0$ on $\overline{\Omega_j^1}$ and $w_{\e_j} \leq 0$ on $\overline{\tau(\Omega_j^1)}$. Hence $u_{\epsilon_j}\geq 0 \geq w_{\epsilon_j}$ in $\Omega_j^1 \cap \tau(\Omega_j^1)$ for sufficiently large $j$.
\end{proof} \medskip

\begin{rmk} \label{general}
The above proof works for a more general situation. We prove that for any nonconstant solution to \eqref{AC}, its nodal domain cannot contain half of the sphere, i.e. $\{x_1 \leq 0\}$ or $\{x_1 \geq 0\}$. Therefore the limit interface of nonconstant solutions to \eqref{AC} cannot completely lie on one side of the middle sphere $\Sigma_0$.
\end{rmk}

Thus we get our first observation that there are connected degenerate minimal hypersurfaces which cannot be obtained as the limit interface of solutions to the Allen-Cahn equation.
\medskip

\section{Min-max theory and Proof of \ref{thmb}} \label{min-max}

In this section, we compute the $1$-width of $(S^n, \tilde g)$, where $\tilde g$ is a Riemannian metric with the properties described in \ref{thma}, and describe the space of min-max minimal hypersurfaces achieving this width for both Almgren-Pitts and Allen-Cahn min-max procedures in $(S^n,\tilde g)$.

\subsection{The Almgren-Pitts and Allen-Cahn widths of a closed Riemannian manifold} First, we recall some notations from geometric measure theory and min-max theory. We follow the exposition of \cite{MarquesNevesMultiplicity,Dey} and refer to these articles and the references therein for details. Let $(M^n,g)$ be a closed Riemannian manifold.

Denote by $\mathbf{I}_k(M;\Z_2)$ the space of $k$-dimensional mod $2$ flat chains in $M$. For $k=n$, this space can be identified with the space of Caccioppoli sets (that is, finite perimeter) in $M$. Let also $\mathcal{Z}_{n-1}(M;\Z_2)$ be the space of flat $(n-1)$-dimensional mod 2 chains $T \in \mathbf{I}_n(M;\Z_2)$ such that $T=\partial U$ for some $U \in \mathbf{I}_n(M;\Z_2)$. The latter space can be endowed with the $\mathbf{F}$ \emph{metric} given by
   \[\mathbf{F}(T,S) = \mathcal{F}(T,S)+{\displaystyle \sup_h(|T|(f)-|S|(f))},\]
where $\mathcal{F}$ is the flat metric (see \cite[Section 2]{MarquesNevesMultiplicity}), $|T|$ and $|S|$ are the integral varifolds associated to $T$ and $S$, and the supremum is taken over real-valued Lipschitz functions $f$ defined on the Grassmannian space $G_{n-1}(M)$ with $|f|\leq 1$ and $\mathrm{Lip}(f)\leq 1$.

In \cite{Almgren}, Almgren computed the homotopy groups of spaces of flat cycles in closed Riemannian manifolds. These results imply, in particular, that $ \mathbf{I}_{n}(M;\Z_2)$ is simply connected, $\pi_1(\mathcal{Z}_{n-1}(M;\Z_2))\simeq \Z_2$, and the boundary map $\partial \colon \mathbf{I}_n(M;\Z_2)\to \mathcal{Z}_{n-1}(M;\Z_2)$ is a $2$-cover. Let $X$ be a cubical complex which has a non-trivial $2$-cover $\pi\colon \tilde X \to X$. The class of continuous maps $\Phi\colon X \to \mathcal{Z}_{n-1}(M;\Z_2)$ which are continuous with respect to the $\mathbf{F}$ metric and such that the induced maps between the fundamental groups satisfy $\ker \Phi_* = \mathrm{im} \pi_*$ is a well-defined homotopy class \cite[Proposition 2.5]{Dey}. Its associated \emph{Almgren-Pitts width} is the number
    \[\mathbf{L}_{AP}(\Pi;M,g)=\mathbf{L}_{AP}(\Pi):=\adjustlimits \inf_{\Phi \in \Pi}\sup_{x \in X} \mathbf{M}(\Phi(x)),\]
where $\mathbf{M}(T)$ is the \emph{mass} (or, intuitively, the area) of a cycle $T \in \mathcal{Z}_{n-1}(M;\Z_2)$ calculated with respect to the metric $g$.

We denote by $\mathbf{C}_{AP}(\Pi)$ the set of min-max stationary varifolds with mass $\mathbf{L}_{AP}(\Pi)$. Concretely, we say that a stationary integral $(n-1)$-varifold $V$ with optimal regularity (in the sense of \cite{Pitts,SchoenSimon}) is in $\mathbf{C}_{AP}(\Pi)$ if its mass is $\mathbf{L}_{AP}(\Pi)$ and if there exists sequences $\{\Phi_i\} \subset \Pi$ and $\{x_i\} \subset X$ such that $\mathbf{M}(\Phi_i(x_i))=\sup \mathbf{M}(\Phi_i)\to \mathbf{L}_{AP}(\Pi)$ and $|\Phi_i(x_i)| \to V$ in the weak varifold topology.

In \cite{Dey}, Dey compared the Almgren-Pitts width $\mathbf{L}_{AP}(\Pi)$ and the critical set $\mathbf{C}_{AP}(\Pi)$ with the limit interface associated to min-max solutions of Allen-Cahn in the sense of \cite{GasparGuaraco,Guaracominmax}. Concretely, let $\tilde \Pi$ be the space of continuous $\Z_2$-equivariant maps $h\colon \tilde X \to W^{1,2}(M)\setminus \{0\}$, where the $\Z_2$ actions in $\tilde X$ and $W^{1,2}(M)\setminus\{0\}$ are the deck transformation associated to $\pi\colon \tilde X \to X$ and the antipodal map $u \mapsto -u$ in $W^{1,2}(M)$, respectively. The $\e$-\emph{Allen-Cahn width} associated to $\tilde \Pi$ is defined by
    \[\mathbf{L}_\e(\tilde \Pi;M,g)=\mathbf{L}_{\e}(\tilde \Pi) := \adjustlimits \inf_{h \in \tilde \Pi}\sup_{x \in \tilde X} E_\e(h(x)).\]
By \cite[Theorem1.2]{Dey}, the Allen-Cahn width of $\Pi$ can be obtained as the limit of the $\e$-Allen-Cahn widths associated to $\tilde \Pi$, that is
    \begin{equation}\label{AP-AC widths}
        \frac{1}{2\sigma}\lim_{\epsilon \to 0}\mathbf{L}_\e(\tilde \Pi)= \mathbf{L}_{AP}(\Pi)
    \end{equation}
    
Finally, we denote by $\mathbf{C}_{AC}(\tilde \Pi)$ the set of all stationary integral $(n-1)$-varifolds $V$ in $M$ with optimal regularity, in the sense of \cite{TW,Guaracominmax} (see also \cite[Theorem 2.7]{Dey}), and such that $V$ is the varifold limit of a sequence $V_i$ of varifolds associated to min-max solutions $u_i$ of \eqref{AC}, with $\e=\e_i \downarrow 0$, corresponding to $\tilde \Pi$. This means that there exists a sequence $\{h_i\}$ in $\tilde \Pi$ such that $\sup_{x \in \tilde X}E_{\e_i}(h_i(x)) \to \mathbf{L}_\e(\tilde \Pi)$ and $\mathrm{dist}_{W^{1,2}}(u_i,h_i(\tilde X))\to 0$.

In \cite[Theorems 1.4]{Dey}, Dey proved that $\mathbf{C}_{AC}(\tilde \Pi)\subset \mathbf{C}_{AP}(\Pi)$. Our \ref{thmb} can be stated as follows: if $\tilde g$ is a metric on $S^n$ as given by Proposition \ref{metric}, then $\mathbf{L}_{AC}(\Pi)$ is the first min-max width $\omega_1(S^n,\tilde g)$, in the sense of \cite{MarquesNevesRicci}, and $\mathbf{C}_{AC}(\tilde \Pi)=\{|\Sigma_0|\}$ is strictly contained in $\mathbf{C}_{AP}(\Pi)$.

\subsection{The \texorpdfstring{$1-$}{1-}width of \texorpdfstring{$(S^n,\tilde g)$}{(S3,g~)} and its critical set} \label{1width}

We describe the first min-max width of the Riemannian manifolds $(S^n,\tilde g)$ in Theorem \ref{thma} assuming the length of the cylindrical region is sufficiently large. This leads to the following observation: even though the Almgren-Pitts $1$-width and the Allen-Cahn $1$-width are the same (see \cite{Dey}), the sets of minimal hypersurfaces that realize these widths are different.

We consider the canonical sweepout $\Phi\colon [-1,1] \mapsto \mathcal{Z}_{n-1}(M;\Z_2)$ given by $\Phi(t) = \partial \llbracket \{x_1 <t\}\rrbracket = \llbracket \Sigma_t \rrbracket$, namely the family of mod $2$ flat cycle associated to each sphere $\Sigma_t$. It induces a map from $S^1$ to $\mathcal{Z}_{n-1}(M,\Z_2)$ which we still denote by $\Phi$ which belongs to the homotopy class $\Pi$ described above (with associated double cover $e^{i\theta} \in S^1 \mapsto e^{i2\theta} \in S^1$). Since
    \[\max_{t \in [-1,1]}\mathbf{M}(\llbracket \Sigma_t \rrbracket) = \max_{t \in [0,R]} \rho(t)^{n-1}\cdot \area(S^{n-1}) = \area(S^{n-1}) = \mathbf{M}(\llbracket \Sigma_0 \rrbracket),\]
where the area of $S^{n-1}$ is computed with respect to the round metric, we conclude
    \[\omega_1(S^n,\tilde g) \leq \mathbf{L}_{AP}(\Pi; S^n,\tilde g) \leq \area(S^{n-1}).\]
We will now show that the equality holds, that is:

\begin{prop} \label{width}
There exists $R=R(n)>0$ with the following property. If the Riemannian metric $\tilde g$ is so that $r>R(n)$, then the $1-$width of $(S^n, \tilde g)$ is area of the minimal spheres $\Sigma_t$, for $|t|\leq\delta$.
\end{prop}

\begin{proof}
Let $A = \{x \in S^n \mid |x_1| \leq \delta\}$ be the cylindrical region in $(S^n,\tilde g)$. It follows from \cite{LMNWeyl}, Proposition 2.15 that $\omega_1(S^n,\tilde g) \geq \omega_1(A,\tilde g) = \omega_1([-r,r]\times S^{n-1})$, where the cylinder has the product metric. Moreover, it follows from the proof of \cite[Theorem 9]{Song} that there exists $\alpha=\alpha(n)>0$ such that the 1-width of $[r,r]\times S^{n-1}$ is the area of a totally geodesic slice $t \times S^{n-1}$ provided $r>\alpha$. Therefore, $\omega_1(S^n,\tilde g)\geq \area(S^{n-1})$ for such metrics. By the observation above, we conclude $\omega_1(S^n,\tilde g)=\area(S^{n-1})$.
\end{proof}

We also recall the following characterization of the least area minimal hypersurfaces in $(S^n,\tilde g)$:

\begin{prop} \label{least area}
We can choose the metrics $\tilde g$ so that any smooth, embedded, closed minimal hypersurface in $(S^n,\tilde g)$ that has area $\leq \area(\Sigma_0)$ is a sphere $\Sigma_t$, for some $|t|\leq\delta$.
\end{prop}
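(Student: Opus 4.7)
My plan is to show that, provided the cylinder half-length $r$ in \ref{thma}(ii) is chosen sufficiently large, any smooth embedded closed minimal hypersurface $\Sigma$ in $(S^n,\tilde g)$ with $\area(\Sigma)\le\area(\Sigma_0)$ must coincide with one of the slices $\Sigma_t$, $|t|\le\delta$. The argument combines a maximum-principle analysis (to force $\Sigma$ to pass through both poles, unless it is already a slice) with the monotonicity formula at the pole (to turn this into an area lower bound that eventually exceeds $\area(\Sigma_0)$).

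First, I would use the maximum principle to pin down where $\Sigma$ can sit. If $\Sigma\subset\{|x_1|\le\delta\}$, then the minimal foliation $\{\Sigma_t\}_{|t|\le\delta}$ together with the strong maximum principle, applied at a point realizing $\max_\Sigma x_1$, forces $\Sigma=\Sigma_{t^*}$ for some $|t^*|\le\delta$, and the proof is complete. Otherwise, via the involution $\tau$ from the proof of \ref{thma}, I may assume $\Sigma$ meets the right open cap $\{x_1>\delta\}$. Writing the cap as the warped product $[0,a]\times S^{n-1}$ with $t=0$ the pole $p^+$, each interior slice $\{t=t^*\}$, $t^*\in(0,a)$, has mean curvature vector $-(n-1)(\rho'(t^*)/\rho(t^*))\,\partial_t$ pointing strictly toward $p^+$. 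This sign forces $\max_\Sigma x_1=1$: if not, $\Sigma$ would be tangent to an interior cap slice from its non-mean-convex side, and writing $\Sigma$ as a graph $\{u=f\}$ over the slice in the normal direction $+\partial_t$ gives $\Delta f(p_0)=H_T<0$ at an interior minimum of $f\ge 0$, contradicting $\Delta f(p_0)\ge 0$. Hence $p^+\in\Sigma$. An analogous case analysis for $\min_\Sigma x_1$ rules out values in $[-\delta,\delta]$ (tangency to a minimal cylinder slice $\Sigma_{t^*}$ would force $\Sigma=\Sigma_{t^*}$ by the strong maximum principle, contradicting $p^+\in\Sigma$) and values in $(-1,-\delta)$ (strict maximum principle in the left cap, by the same graph computation), so $p^-\in\Sigma$ as well.

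Next, I would apply the monotonicity formula at the pole. By rotational symmetry, the cut locus of $p^+$ is $\{p^-\}$, so the injectivity radius at $p^+$ equals $2(r+a)$. Since $\Ric_{\tilde g}\ge 0$, the monotonicity formula for minimal hypersurfaces yields
\[
\area(\Sigma)\;\ge\;\area(\Sigma\cap B_R(p^+))\;\ge\;c_0\,R^{n-1}
\]
for every $R<2(r+a)$, where $c_0>0$ depends only on $n$ and the fixed cap profile $\rho$. Choosing $r$ large enough that $c_0\,r^{n-1}>\area(S^{n-1})$ and taking $R=r$ then produces $\area(\Sigma)>\area(\Sigma_0)$, contradicting the hypothesis.

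The delicate point will be verifying that the monotonicity constant $c_0$ can be taken independent of $r$. This should follow because the nontrivially curved part of $(S^n,\tilde g)$ is confined to the caps, whose geometry is fixed once $n$ is given, while the cylindrical region is a flat-in-$\partial_t$ Riemannian product and therefore does not introduce additional decay in the monotonicity ratio $R^{-(n-1)}\area(\Sigma\cap B_R(p^+))$ as $R$ grows past the cap radius $a$. In short, the cap contributes a single universal curvature correction, and after that monotonicity behaves Euclidean-like in the $\partial_t$ direction, so enlarging $r$ drives the area lower bound above $\area(S^{n-1})$ as required.
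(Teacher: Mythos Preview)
Your overall strategy matches the paper's (which defers the second step to \cite{Song}): maximum principle to handle $\Sigma\subset\{|x_1|\le\delta\}$, then an area lower bound from monotonicity when $\Sigma$ meets a cap. There is, however, a genuine gap in your monotonicity step. The inequality $\area(\Sigma\cap B_R(p^+))\ge c_0R^{n-1}$ with $c_0$ independent of $r$ is \emph{false} for large $R$ in this geometry. For $R>a$ the geodesic ball $B_R(p^+)$ is simply $\{t<R\}$, whose volume grows only linearly in $R$, and the Hessian of $t=d(p^+,\cdot)$ vanishes identically on the cylindrical part; your heuristic that ``monotonicity behaves Euclidean-like in the $\partial_t$ direction'' cannot manufacture $R^{n-1}$ growth out of this. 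A concrete counterexample is the totally geodesic hypersurface $\Sigma_{\mathrm{eq}}=\{(t,\theta):\theta\in S^{n-2}_{\mathrm{eq}}\}$, the cone over an equatorial $S^{n-2}\subset S^{n-1}$ (which is smooth through both poles), for which $\area(\Sigma_{\mathrm{eq}}\cap B_R(p^+))=\area(S^{n-2})\int_0^R\rho(t)^{n-2}\,dt$ is linear in $R$ once $R>a$.

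The fix --- and this is essentially the argument of \cite{Song} that the paper invokes --- is to apply monotonicity only at a \emph{fixed small scale} $\rho_0>0$ determined by the bounded geometry of the cylinder, not at scale $R\sim r$. Once you know that $\Sigma$ meets every cylinder slice $\{t=c\}$ across the full cylindrical range (this follows from $p^+,p^-\in\Sigma$ together with connectedness; note that your case analysis for $p^-\in\Sigma$ omits the possibility $\min_\Sigma x_1\in(\delta,1]$, i.e.\ $\Sigma$ entirely in the right cap, though this is easily dispatched, for instance via the rigidity of \cite{PetersenWilhelm} applied to $\Sigma$ and the disjoint minimal slice $\{t=a\}$), pick points $p_k\in\Sigma$ on slices spaced $2\rho_0$ apart, observe that the balls $B_{\rho_0}(p_k)$ are pairwise disjoint, and sum the small-scale monotonicity bounds to obtain $\area(\Sigma)\ge c_1\, r$. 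This linear lower bound still tends to infinity with $r$ and therefore suffices for the conclusion.
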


\begin{proof}
It follows from the maximum principle that any connected, closed minimal hypersurface that is contained in the cylindrical region $\{x \in S^{n} \mid |x_1|< \delta\} \simeq (-r,r)\times S^{n-1}$ is a minimal sphere $\Sigma_t$. On the other hand, we can argue once again as in the proof of \cite[Theorem 9]{Song} to see that, by the monotonicity formula, we can choose $r$ (which, using the notation of the proof of Proposition \ref{metric}, corresponds to the length $R-a$ of the interval where $\rho$ is constant) so that any minimal hypersurface that is not contained in that region has area $>\area(\Sigma_0)$.
\end{proof}

\begin{rmk}
We observe that any closed, stable minimal hypersurface $\Gamma$ in $(S^n,\tilde g)$ must be one of the spheres $\Sigma_t$ with $|t|<\delta$. In fact, since $\Ric_{\tilde g}\geq 0$, by the second variation formula (see e.g. \cite[Corollary 1.33]{colding2011course}), we have $\Ric_{\tilde g}(N,N)\equiv 0$ along $\Gamma$, where $N$ is a unit normal vector field. By Remark \ref{positivity}, we see that $\Gamma$ is contained in the cylindrical region $\{x \in S^{n} \mid |x_1|\leq \delta\}$, and $N = {\partial_t}$ along $\Gamma$. Therefore $\Gamma$ is a totally geodesic sphere orthogonal to $\partial_t$.
\end{rmk}

\begin{rmk}
Even though the argument presented here does not give an explicit estimate for the smallest $R(n)>0$ for which our results hold true, we observe that the next proof remains valid as long as $\omega_1(S^n,\tilde g)=|\Sigma_0|$. As observed in the introduction, it seems reasonable to expect that this is true for any $r>0$.
\end{rmk} \medskip

\begin{proof}[Proof of \ref{thmb}]

By Proposition \ref{width}, we see that
    \[\omega_1(S^n,\tilde g) \leq \mathbf{L}_{AP}(\Pi; S^n,\tilde g) \leq \mathbf{M}(\llbracket \Sigma_0 \rrbracket) = \omega_1(S^n,\tilde g),\]
In particular, this proves the first property stated in \ref{thmb}. It also shows that all the minimal spheres $\{\Sigma_t\}_{t \in [-\delta,\delta]}$ achieve the first width of $\omega_1(S^n,\tilde g)$.

In \cite{Dey}, Dey proved that the Almgren-Pitts $1$-width and the Allen-Cahn $1$-width are the same, namely:
    \[\mathbf{L}_{AP}(\Pi; S^n,\tilde g) = {\textstyle \frac{1}{2\sigma}}\lim_{\e \to 0^+}\mathbf{L}_\e(\tilde \Pi).\]
In particular, if $V \in \mathbf{C}_{AC}(\tilde \Pi)$, then $\|V\|(M) = \omega_1(S^n,\tilde g) = \area(\Sigma_0)$. 
But then Proposition \ref{least area} implies that $V$ is the varifold induced by one of the spheres $\Sigma_t$ for some $|t|\leq \delta$. By \ref{thma}, we conclude that $V$ is induced by $\Sigma_0$ and $\mathbf{C}(\tilde\Pi)=\{|\Sigma_0|\}$, namely and the Allen-Cahn $1$-width of $(S^g,\tilde g)$ is realized only by the middle sphere $\Sigma_0$.
\end{proof}

\begin{proof}[Proof of \ref{corollary}]
For each $\epsilon>0$, let $u_{\epsilon}$ be a least energy nonconstant solution of \eqref{AC} in $(S^n,\tilde g)$. We recall that by \cite[Theorem 2.3]{caju2019solutions}, such solution is an index 1 min-max solution associated to the first Allen-Cahn width, that is, such that $E_\e(u_\e) = \mathbf{L}_\e(\tilde \Pi)$. Therefore, the only limit interface associated to $\{u_\e\}$ is $\Sigma_0$. By Remark \ref{index} in the introduction, it follows that $u_{\epsilon}$ has nullity $0$, for sufficiently small $\epsilon>0$.

Consider now a $1$-parameter family $R_t$ of isometries of $S^{n-1}$ (endowed with the round metric) generated by a Killing field $X$. Since $\tilde g$ was constructed as a warped product metric on a cylinder, $R_t$ naturally gives rise to a $1$-parameter family of isometries of $(S^n, \tilde g)$ fixing the poles where $x_1 = \pm 1$. The associated Killing field then extends to a Killing field $X$ of $(S^n,\tilde g)$ which is parallel to the spheres $\{x_1=c\}$, for $c \in [-1,1]$ and which vanishes at the poles.

It follows that $u_{\epsilon}\circ R_t$ is a $1$-parameter family of solutions of (AC) in $(S^n, \tilde g)$, and hence $\partial_t(u_\epsilon \circ R_t) =\langle \nabla u_\epsilon, X \rangle$ belongs to the nullity of $u_{\epsilon}$. This proves that $\langle \nabla u_{\epsilon}, X \rangle =0$ for any such family of isometries. Since the corresponding Killing fields span any tangent space to the spheres $\{x_1=c\}$, it follows that $u_\epsilon$ is constant along them, namely, that $u_\epsilon$ is rotationally symmetric.

Since $u_{\epsilon}$ has $\Sigma_0$ with \emph{multiplicity one} as its associated limit interface, it follows that its nodal set $\{u_{\epsilon}=0\}$ is connected and hence it can be written as $\{x_1=a_\epsilon\}$ for some $a_\epsilon \downarrow 0$. Then we can argue as in the proof of \ref{thma}, or directly use the Frankel property \cite[Proposition 10]{H} to conclude that $\{u_\epsilon =0\} = \{x_1=0\} =\Sigma_0$. By the uniqueness of the Dirichlet solution on the domain $\{x_1 \leq 0\}$, e.g. \cite{BO}, we conclude that $u_\epsilon = \pm v_\epsilon$.
\end{proof}

\begin{rmk} \label{nonsymmetric}
Consider a metric $\overline{g}$ on $S^n$ with nonnegative Ricci curvature which satisfies property (ii) in \ref{thma}. The comparison argument explored here indicates that, for each positive integer $m$, there should be at most one minimal slice $\Sigma_t =\{x_1 =t\}$ which can be obtained as a limit interface with multiplicity $m$. Nevertheless, if $\overline g$ is not invariant by a reflection, we cannot proceed exactly as in the proof of \ref{thma} to reach a contradiction (nor use the same strategy to construct an Allen-Cahn solution that vanishes precisely along $\Sigma_0$). In addition, the comparison argument, does not directly rule out approximations of different slices arising form different sequences of parameters $\e_j \downarrow 0$. 

Nevertheless, for $m=1$, we believe that it should be possible to use a Lyapunov-Schmidt reduction argument to produce Allen-Cahn solutions $v_\e$ with a single layer near \emph{some} minimal slice $\Sigma_{t_0}$ \emph{for all sufficiently small} $\e>0$, ruling out any other minimal slice as a multiplicity one limit interface. Furthermore, if the length of the cylindrical region is sufficiently large, then $\Sigma_{t_0}$ is the least area minimal hypersurface in $(S^n,\overline g)$, and $v_\e$ should be the only least energy unstable critical point of the energy, which also achieves the first Allen-Cahn width of $(S^n,\overline g)$. As mentioned in the introduction, a closely related construction for Neumann solutions in planar domains which contain a rectangle $[0,1]\times [0,b]$ can be found in \cite{KowalczykThesis,AFK}. In these works, the authors construct solutions which approximate the degenerate line segment $\{1/2\}\times [0,b]$. In view of these constructions, we expect that the only multiplicity one limit interface should be located near the middle of the (maximal) cylindrical region in $(S^n,\overline g)$.
\end{rmk}

\bibliography{main}

\begin{thebibliography}{10}

\bibitem{Aiex}
{\sc Aiex, N.~S.}
\newblock The width of ellipsoids.
\newblock {\em Comm. Anal. Geom. 27}, 2 (2019), 251--285.

\bibitem{AFK}
{\sc Alikakos, N.~D., Fusco, G., and Kowalczyk, M.}
\newblock Finite-dimensional dynamics and interfaces intersecting the boundary: equilibria and quasi-invariant manifold.
\newblock {\em Indiana Univ. Math. J. 45}, 4 (1996), 1119--1155.

\bibitem{Almgren}
{\sc Almgren, Jr., F.~J.}
\newblock The homotopy groups of the integral cycle groups.
\newblock {\em Topology 1\/} (1962), 257--299.

\bibitem{AmbrozioMontezuma}
{\sc Ambrozio, L., and Montezuma, R.}
\newblock On the min-max width of unit volume three-spheres.
\newblock {\em arXiv preprint arXiv:1809.03638 [math.DG]. To appear in J. Differ. Geom.\/} (2018).

\bibitem{BO}
{\sc Brezis, H., and Oswald, L.}
\newblock Remarks on sublinear elliptic equations.
\newblock {\em Nonlinear Anal. 10}, 1 (1986), 55--64.

\bibitem{AC}
{\sc Cahn, J., and Allen, S.}
\newblock A microscopic theory for domain wall motion and its experimental verification in {Fe}-{Al} alloy domain growth kinetics.
\newblock {\em Le Journal de Physique Colloques 38}, C7 (1977), C7--51.

\bibitem{caju2019solutions}
{\sc Caju, R., and Gaspar, P.}
\newblock Solutions of the {A}llen-{C}ahn equation on closed manifolds in the presence of symmetry.
\newblock {\em arXiv preprint arXiv:1906.05938 [math.DG]. To appear in Comm. Anal. Geom.\/} (2019).

\bibitem{caju2022ground}
{\sc Caju, R., Gaspar, P., Guaraco, M. A.~M., and Matthiesen, H.}
\newblock Ground states of semilinear elliptic problems with applications to the {A}llen-{C}ahn equation on the sphere.
\newblock {\em Calc. Var. Partial Differential Equations 61}, 2 (2022), Paper No. 71, 35.

\bibitem{Calabicao}
{\sc Calabi, E., and Cao, J.~G.}
\newblock Simple closed geodesics on convex surfaces.
\newblock {\em Journal of Differential Geometry 36}, 3 (1992), 517--549.

\bibitem{ChenGaspar}
{\sc Chen, J., and Gaspar, P.}
\newblock Mean curvature flow and low energy solutions of the parabolic {A}llen-{C}ahn equation on the three-sphere.
\newblock {\em J. Geom. Anal. 33}, 9 (2023), Paper No. 283, 30.

\bibitem{ChodoshMantoulidis}
{\sc Chodosh, O., and Mantoulidis, C.}
\newblock Minimal surfaces and the {A}llen-{C}ahn equation on 3-manifolds: index, multiplicity, and curvature estimates.
\newblock {\em Ann. of Math. (2) 191}, 1 (2020), 213--328.

\bibitem{CMSurfaces}
{\sc Chodosh, O., and Mantoulidis, C.}
\newblock The p-widths of a surface.
\newblock {\em Publ. Math. Inst. Hautes \'{E}tudes Sci. 137\/} (2023), 245--342.

\bibitem{ChoeFraser}
{\sc Choe, J., and Fraser, A.}
\newblock Mean curvature in manifolds with {R}icci curvature bounded from below.
\newblock {\em Comment. Math. Helv. 93}, 1 (2018), 55--69.

\bibitem{ColdingDeLellis}
{\sc Colding, T.~H., and De~Lellis, C.}
\newblock The min-max construction of minimal surfaces.
\newblock In {\em Surveys in differential geometry, {V}ol. {VIII} ({B}oston, {MA}, 2002)}, vol.~8 of {\em Surv. Differ. Geom.} Int. Press, Somerville, MA, 2003, pp.~75--107.

\bibitem{colding2011course}
{\sc Colding, T.~H., and Minicozzi, II, W.~P.}
\newblock {\em A course in minimal surfaces}, vol.~121 of {\em Graduate Studies in Mathematics}.
\newblock American Mathematical Society, Providence, RI, 2011.

\bibitem{DePhilippisPigati}
{\sc De~Philippis, G., and Pigati, A.}
\newblock Non-degenerate minimal submanifolds as energy concentration sets: a variational approach.
\newblock {\em arXiv preprint arXiv:2205.12389 [math.DG]\/} (2022).

\bibitem{dPKW-DeGiorgi}
{\sc del Pino, M., Kowalczyk, M., and Wei, J.}
\newblock On {D}e {G}iorgi's conjecture in dimension {$N\geq 9$}.
\newblock {\em Ann. of Math. (2) 174\/} (2011).

\bibitem{dPKW}
{\sc del Pino, M., Kowalczyk, M., and Wei, J.}
\newblock Entire solutions of the {A}llen-{C}ahn equation and complete embedded minimal surfaces of finite total curvature in {$\Bbb R^3$}.
\newblock {\em J. Differential Geom. 93\/} (2013).

\bibitem{dPKWY}
{\sc del Pino, M., Kowalczyk, M., Wei, J., and Yang, J.}
\newblock Interface foliation near minimal submanifolds in {R}iemannian manifolds with positive {R}icci curvature.
\newblock {\em Geom. Funct. Anal. 20\/} (2010).

\bibitem{Dey}
{\sc Dey, A.}
\newblock A comparison of the {A}lmgren-{P}itts and the {A}llen-{C}ahn min-max theory.
\newblock {\em Geom. Funct. Anal. 32}, 5 (2022), 980--1040.

\bibitem{farina2013stable}
{\sc Farina, A., Sire, Y., and Valdinoci, E.}
\newblock Stable solutions of elliptic equations on {R}iemannian manifolds.
\newblock {\em J. Geom. Anal. 23}, 3 (2013), 1158--1172.

\bibitem{GasparGuaraco}
{\sc Gaspar, P., and Guaraco, M. A.~M.}
\newblock The {A}llen-{C}ahn equation on closed manifolds.
\newblock {\em Calc. Var. Partial Differential Equations 57}, 4 (2018), Paper No. 101, 42.

\bibitem{guaraco2019multiplicity}
{\sc Guaraco, M.~A., Marques, F.~C., and Neves, A.}
\newblock Multiplicity one and strictly stable {A}llen-{C}ahn minimal hypersurfaces.
\newblock {\em arXiv preprint arXiv:1912.08997 [math.DG]\/} (2019).

\bibitem{Guaracominmax}
{\sc Guaraco, M. A.~M.}
\newblock Min-max for phase transitions and the existence of embedded minimal hypersurfaces.
\newblock {\em J. Differential Geom. 108}, 1 (2018), 91--133.

\bibitem{HanLin}
{\sc Han, Q., and Lin, F.}
\newblock {\em Elliptic partial differential equations}, second~ed., vol.~1 of {\em Courant Lecture Notes in Mathematics}.
\newblock Courant Institute of Mathematical Sciences, New York; American Mathematical Society, Providence, RI, 2011.

\bibitem{H}
{\sc Hiesmayr, F.}
\newblock Rigidity of low index solutions on ${S}^3$ via a {F}rankel theorem for the {A}llen-{C}ahn equation.
\newblock {\em arXiv preprint arXiv:2007.08701 [math.DG]\/} (2020).

\bibitem{HT}
{\sc Hutchinson, J.~E., and Tonegawa, Y.}
\newblock Convergence of phase interfaces in the van der {W}aals-{C}ahn-{H}illiard theory.
\newblock {\em Calc. Var. Partial Differ. Equ. 10}, 1 (2000), 49--84.

\bibitem{KowalczykThesis}
{\sc Kowalczyk, M.}
\newblock {\em Study of the equilibria of parabolic differential equations with interfaces intersecting the boundary}.
\newblock ProQuest LLC, Ann Arbor, MI, 1995.
\newblock Thesis (Ph.D.)--The University of Tennessee.

\bibitem{Kowalczyk}
{\sc Kowalczyk, M.}
\newblock On the existence and {M}orse index of solutions to the {A}llen-{C}ahn equation in two dimensions.
\newblock {\em Ann. Mat. Pura Appl. (4) 184\/} (2005).

\bibitem{LMNWeyl}
{\sc Liokumovich, Y., Marques, F.~C., and Neves, A.}
\newblock Weyl law for the volume spectrum.
\newblock {\em Ann. of Math. (2) 187}, 3 (2018), 933--961.

\bibitem{mantoulidis2022double}
{\sc Mantoulidis, C.}
\newblock Double-well phase transitions are more rigid than minimal hypersurfaces.
\newblock {\em Proc. Amer. Math. Soc. 152\/} (2024), 1301--1308.

\bibitem{MarquesNevesRicci}
{\sc Marques, F.~C., and Neves, A.}
\newblock Existence of infinitely many minimal hypersurfaces in positive {R}icci curvature.
\newblock {\em Invent. Math. 209}, 2 (2017), 577--616.

\bibitem{MarquesNevesSurvey}
{\sc Marques, F.~C., and Neves, A.}
\newblock Applications of min-max methods to geometry.
\newblock In {\em Geometric analysis}, vol.~2263 of {\em Lecture Notes in Math.} Springer, Cham, 2020, pp.~41--77.

\bibitem{MarquesNevesMultiplicity}
{\sc Marques, F.~C., and Neves, A.}
\newblock Morse index of multiplicity one min-max minimal hypersurfaces.
\newblock {\em Adv. Math. 378\/} (2021).

\bibitem{mazetrosenberg}
{\sc Mazet, L., and Rosenberg, H.}
\newblock Minimal hypersurfaces of least area.
\newblock {\em J. Differential Geom. 106}, 2 (2017), 283--315.

\bibitem{ONeill}
{\sc O'Neill, B.}
\newblock {\em Semi-{R}iemannian geometry with applications to relativity}, vol.~103 of {\em Pure and Applied Mathematics}.
\newblock Academic Press, Inc. [Harcourt Brace Jovanovich, Publishers], New York, 1983.

\bibitem{Pacard}
{\sc Pacard, F.}
\newblock The role of minimal surfaces in the study of the {A}llen-{C}ahn equation.
\newblock In {\em Geometric analysis: partial differential equations and surfaces}, vol.~570 of {\em Contemp. Math.} Amer. Math. Soc., Providence, RI, 2012, pp.~137--163.

\bibitem{pacard2003constant}
{\sc Pacard, F., and Ritor{\'e}, M.}
\newblock From constant mean curvature hypersurfaces to the gradient theory of phase transitions.
\newblock {\em Journal of Differential Geometry 64}, 3 (2003), 359--423.

\bibitem{petersen2006riemannian}
{\sc Petersen, P.}
\newblock {\em Riemannian geometry}, third~ed., vol.~171 of {\em Graduate Texts in Mathematics}.
\newblock Springer, Cham, 2016.

\bibitem{PetersenWilhelm}
{\sc Petersen, P., and Wilhelm, F.}
\newblock On {F}rankel's theorem.
\newblock {\em Canad. Math. Bull. 46}, 1 (2003), 130--139.

\bibitem{Pitts}
{\sc Pitts, J.~T.}
\newblock {\em Existence and regularity of minimal surfaces on {R}iemannian manifolds}, vol.~27 of {\em Mathematical Notes}.
\newblock Princeton University Press, Princeton, NJ; University of Tokyo Press, Tokyo, 1981.

\bibitem{SchoenSimon}
{\sc Schoen, R., and Simon, L.}
\newblock Regularity of stable minimal hypersurfaces.
\newblock {\em Comm. Pure Appl. Math. 34}, 6 (1981), 741--797.

\bibitem{Silva}
{\sc Silva, {\'E}.~M.}
\newblock A strong min-max property for level sets of {P}hase {T}ransitions.
\newblock {\em arXiv preprint arXiv:2310.03683 [math.DG]\/} (2023).

\bibitem{Song}
{\sc Song, A.}
\newblock Existence of infinitely many minimal hypersurfaces in closed manifolds.
\newblock {\em Ann. of Math. (2) 197}, 3 (2023), 859--895.

\bibitem{song2021generic}
{\sc Song, A., and Zhou, X.}
\newblock Generic scarring for minimal hypersurfaces along stable hypersurfaces.
\newblock {\em Geometric and Functional Analysis 31}, 4 (2021), 948--980.

\bibitem{Souam}
{\sc Souam, R.}
\newblock Stable {CMC} and index one minimal surfaces in conformally flat manifolds.
\newblock {\em Int. Math. Res. Not. IMRN}, 13 (2015), 4626--4637.

\bibitem{TW}
{\sc Tonegawa, Y., and Wickramasekera, N.}
\newblock Stable phase interfaces in the van der {W}aals--{C}ahn--{H}illiard theory.
\newblock {\em J. Reine Angew. Math. 668\/} (2012).

\bibitem{wangzhouexistence}
{\sc Wang, Z., and Zhou, X.}
\newblock Existence of four minimal spheres in ${S}^3$ with a bumpy metric.
\newblock {\em arXiv preprint arXiv:2305.08755 [math.DG]\/} (2023).

\bibitem{Zhou}
{\sc Zhou, X.}
\newblock On the multiplicity one conjecture in min-max theory.
\newblock {\em Ann. of Math. (2) 192}, 3 (2020), 767--820.

\end{thebibliography}
\bibliographystyle{acm}

\end{document}